\documentclass[12pt,a4paper]{amsart}
\usepackage{amsfonts, amsmath, amssymb, amscd, amsthm, array}

\usepackage{multicol}
\usepackage{subfig}
\usepackage{color}
\usepackage[all]{xy}
\usepackage{hyperref,url}

\hoffset -1.35cm \voffset -1cm \textwidth=6in \textheight=8.7in
\tolerance=9000 \emergencystretch=5pt \vfuzz=2pt
\parskip=1.5mm

\makeatletter
\def\blfootnote{\xdef\@thefnmark{}\@footnotetext}
\makeatother

\newtheorem{thm}{Theorem}[section]

\newtheorem{lem}[thm]{Lemma}
\newtheorem{prop}[thm]{Proposition}

\theoremstyle{definition}

\newtheorem{df}[thm]{Definition}

\theoremstyle{remark}




\newcommand{\gen}[1]{\left\langle#1\right\rangle}

\begin{document}

\title{A note on the Farrell-Jones conjecture for relatively hyperbolic groups}
\author{Yago Antol\'{i}n}
\address[Yago Antol\'{i}n]{1326 Stevenson Center,
Vanderbilt University,
Nashville, TN 37240, USA.}
\email{yago.anpi@gmail.com}
\urladdr{https://sites.google.com/site/yagoanpi/}

\author{Giovanni Gandini}
\address[Giovanni Gandini]{K{\o}benhavns Universitet, Institut for Matematiske Fag, Universitetsparken 5, 2100 K{\o}benhavn \O, Denmark}
\email{ggandini@math.ku.dk}
\urladdr{http://www.math.ku.dk/~zjb179}

\thanks{The work of the first author was partially supported by  the MCI (Spain) through project MTM2011-25955. The second  author was supported by the Danish National Research Foundation (DNRF) through the Centre for Symmetry and Deformation}


\begin{abstract} 
For a  group $G$ relatively hyperbolic to a family of 
residually finite groups satisfying the Farrell-Jones conjecture, we reduce the solution of the Farrell-Jones conjecture for $G$ to the case of certain nice cyclic extensions in $G$.

\end{abstract}

\keywords{Farrell-Jones Conjecture, Relatively hyperbolic groups.}

\subjclass[2010]{18F25,20F67}
\maketitle

\setcounter{tocdepth}{1}


\section{Introduction}
Let $\mathfrak{C}$ be the class of groups satisfying  the K- and L-theoretic Farrell-Jones Conjecture with finite wreath products (with coefficients in additive categories) with respect to the family of virtually cyclic subgroups. The statement  of the Farrell-Jones conjecture and its applications can be found in 
\cite{bartels2011farrell, nilp}. 
Let $G$ be a  group relatively hyperbolic to a family of 
residually finite groups lying in $\mathfrak{C}$. We remark that from several known results on the Farrell-Jones conjecture and a theorem of  Dahmani, Guirardel and Osin  it follows that $G$ lies in $\mathfrak{C}$ if  certain ``nice''  cyclic extensions lie in $\mathfrak{C}$.

\section{Relatively hyperbolic groups} 
In this section we recall some basic results. 
We follow Osin's definition, which doesn't require the group to be finitely generated. See \cite{Osin06} for details. 

\begin{df}\label{def:RH}
A group $G$ is {\it relatively hyperbolic with respect to a family of subgroups $\{H _\omega \}_{\omega \in \Omega}$}, if it admits a finite  presentation relative to $\{H _\omega \}_{\omega \in \Omega}$ 
and this presentation has a linear relative Dehn function. 

The subgroups $\{H_\omega\}_{\omega \in \Omega}$ are called  {\it peripheral
(or parabolic) subgroups } of $G.$
An element $g\in G$ is called {\it loxodromic} if it has infinite order and it is not conjugate to any element of a parabolic subgroup.
\end{df}


\begin{lem}\cite[Theorem 1.4]{Osin06}\label{lem:A}
Suppose that $G$ is relative hyperbolic with respect to $\{H_\omega\}_{\omega \in \Omega}$. 

\begin{enumerate}
\item[{\rm (i)}] For  any $\omega,\lambda\in \Omega$,  $\omega\neq \lambda$ and $g,h\in G$,  $|H_{\omega}^{g}\cap H_{\lambda}^{h}|<\infty$.
\item[{\rm (ii)}] 
For  any $\omega\in \Omega$,  and $g\in G-H_\omega$, $|H_{\omega}^{g}\cap H_{\omega}|<\infty$.
\end{enumerate}
\end{lem}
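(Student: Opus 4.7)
The plan is to derive both almost-malnormality assertions from the finite relative presentation with linear relative Dehn function, via an argument on van Kampen diagrams. First, by conjugating both subgroups simultaneously, I reduce to two cases: (i') showing $|H_\omega \cap H_\lambda^k| < \infty$ whenever $\omega \neq \lambda$, and (ii') showing $|H_\omega \cap H_\omega^k| < \infty$ whenever $k \notin H_\omega$. Suppose for contradiction that such an intersection contains an infinite sequence of distinct elements $\{x_n\}$. Writing $x_n = k y_n k^{-1}$ with $y_n \in H_\lambda$ (respectively $H_\omega$), and fixing a word $u$ of length $\ell$ over the finite relative generating set $X$ representing $k$, I form the relator $W_n := x_n \cdot u \cdot y_n^{-1} \cdot u^{-1}$. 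Over the alphabet $X \sqcup \mathcal{H}$, where $\mathcal{H} := \bigsqcup_{\omega \in \Omega} (H_\omega \setminus \{1\})$, this word has length $2\ell + 2$ independent of $n$, and contains exactly one $H_\omega$-syllable ($x_n$) and one $H_\lambda$-syllable ($y_n^{-1}$).

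Next, the linear relative isoperimetric inequality produces van Kampen diagrams $\Delta_n$ over the finite relative presentation with uniformly bounded relative area $N$. Following Osin's component analysis, each boundary $\mathcal{H}$-edge lies in a maximal connected component of $H_\omega$-edges (resp.\ $H_\lambda$-edges) in the $1$-skeleton of $\Delta_n$, and the labels around such a component multiply to the identity in the corresponding peripheral subgroup. In case (i'), since $\omega \neq \lambda$, the components of the two boundary syllables are disjoint, so each is isolated; its label is then a product of at most $N$ peripheral letters drawn from interior relators of bounded length, leaving only finitely many possibilities for $x_n$, a contradiction. In case (ii'), the two syllables may lie in the same $H_\omega$-component; tracing this component across $\Delta_n$ extracts an expression $x_n = v y_n v^{-1}$, where $v$ is a word over $X$ of bounded length whose image in $G$ differs from $k$ by an element of $H_\omega$. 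Pigeonholing over finitely many such $v$ forces $k \in H_\omega$, contradicting the hypothesis.

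The main obstacle is the combinatorial analysis of $H_\omega$-components in the diagrams: specifically, verifying that isolated components in (i') admit only finitely many possible labels, and that connected boundary syllables in (ii') force $k$ into a peripheral coset. This is precisely where the \emph{linear} character of the Dehn function is essential, as it bounds the number and length of interior relators encountered along any peripheral component; without linearity, the labels $x_n$ could in principle proliferate indefinitely.
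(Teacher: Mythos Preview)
The paper does not supply its own proof of this lemma; it simply cites \cite[Theorem~1.4]{Osin06}. Your sketch is essentially Osin's original argument via van~Kampen diagrams over the finite relative presentation, and case~(i') is handled correctly: the bounded relative area forces the isolated $H_\omega$-syllable $x_n$ to equal a product of boundedly many $H_\omega$-letters appearing in the finite relator set~$\mathcal{R}$, hence to lie in a fixed finite set.

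There is a small inaccuracy in your treatment of case~(ii'). When the two boundary $H_\omega$-syllables lie in the same $H_\omega$-component of $\Delta_n$, the path connecting them inside that component is labelled entirely by $H_\omega$-letters, not by letters of~$X$; its label is therefore already an element $h\in H_\omega$. Comparing this path with the boundary arc~$u$ joining the same two vertices yields $k=h\in H_\omega$ directly, contradicting the hypothesis $k\notin H_\omega$. No pigeonhole argument over ``finitely many such $v$'' is needed, and your claim that $v$ is a word over~$X$ of bounded length is not what the component structure actually gives you. With this correction the argument goes through and coincides with Osin's.
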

A family of subgroups  $\{H_\omega\}_{\omega \in \Omega}$ of $G$ is {\it almost malnormal} if it
satisfy the conclusions of Lemma \ref{lem:A}.

The following is a simplification of \cite[Theorem 2.40]{Osin06}.
\begin{lem}\label{lem:hyperbolic_parabolic}
Suppose that $G$ is relative hyperbolic with respect to $\{H_\omega\}_{\omega \in \Omega}\cup\{H\}$ where $H$ is a  Gromov hyperbolic group, then $G$ is 
relatively hyperbolic with respect to $\{H_\omega\}_{\omega \in \Omega}$.
\end{lem}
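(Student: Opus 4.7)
The plan is to use the combinatorial characterization of relative hyperbolicity (finite relative presentation with linear relative Dehn function) and substitute a finite presentation of $H$ with linear Dehn function into the given relative presentation, thereby absorbing $H$ into the finite combinatorial part.

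First, by hypothesis fix a finite relative presentation
$$G=\langle X,\,\mathcal{H},\,H \mid \mathcal{R}\cup \mathcal{S}_{\mathcal H}\cup \mathcal{S}_H\rangle$$
with linear relative Dehn function $\delta^{\mathrm{rel}}(n)\le Cn$, where $\mathcal{H}=\bigsqcup_\omega H_\omega$, $\mathcal R$ is finite, and $\mathcal S_*$ denotes the full set of words in the corresponding peripheral alphabet trivial in that peripheral subgroup (the alphabets for different peripherals, including $H$, being pairwise disjoint). Since $H$ is Gromov hyperbolic it is finitely presented with linear Dehn function, so fix a finite symmetric generating set $Y\subseteq H$, a finite presentation $H=\langle Y\mid\mathcal Q\rangle$ with absolute Dehn function $\le C'n$, and for each $h\in H$ a word $W_h\in F(Y)$ with $W_h=h$ in $H$ and $W_y=y$ for $y\in Y$. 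The candidate new finite relative presentation is
$$\langle X\cup Y,\,\mathcal H \mid \mathcal R'\cup \mathcal Q\cup \mathcal S_{\mathcal H}\rangle,$$
where $\mathcal R'$ is obtained from $\mathcal R$ by replacing every $H$-letter $h$ with $W_h$. A direct check shows this presents $G$ with respect to $\{H_\omega\}_{\omega\in\Omega}$.

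The main step is to show the new relative Dehn function is linear. Let $w$ be a word of length $n$ in $X\cup Y\cup\mathcal H$ trivial in $G$; interpret it as a word in $X\cup H\cup\mathcal H$ of length $n$ and invoke the old relative Dehn function to obtain a reduced van Kampen diagram $D$ over the old presentation with boundary $w$ and at most $Cn$ cells of type $\mathcal R$. Transform $D$ into a diagram $D'$ over the new presentation in two moves: (a) subdivide each edge of $D$ labeled by an $H$-letter $h$ into a $Y$-path spelling $W_h$, which converts each $\mathcal R$-cell into an $\mathcal R'$-cell and leaves $\mathcal S_{\mathcal H}$-cells unchanged; and (b) fill each former $\mathcal S_H$-cell, whose new boundary is a word in $Y$ trivial in $H$, with a van Kampen diagram over $\langle Y\mid\mathcal Q\rangle$, using at most $C'$ times its boundary length $\mathcal Q$-cells.

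The main obstacle is bounding the total boundary length of the $\mathcal S_H$-cells in $D$ linearly in $n$, since the relative Dehn function only controls the number of $\mathcal R$-cells and their peripheral perimeters are a priori unrestricted. The key observation is that in a reduced diagram two adjacent $\mathcal S_H$-cells can be merged, so every edge on the boundary of an $\mathcal S_H$-cell either lies on $\partial D$ or is shared with an $\mathcal R$-cell (edges shared with $\mathcal S_{\mathcal H}$-cells cannot occur, because the peripheral alphabets are disjoint). Since $|\partial D|=n$ and the at most $Cn$ $\mathcal R$-cells have uniformly bounded perimeter ($\mathcal R$ being finite), the total $\mathcal S_H$-perimeter in $D$ is $O(n)$; after subdivision the corresponding $Y$-perimeter is still $O(n)$, because the only $H$-letters that need to be expanded are the finitely many occurring in $\mathcal R$ (those on $\partial D$ are already in $Y$), each replaced by a $W_h$ of uniformly bounded length. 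Consequently the total number of $\mathcal R'$- and $\mathcal Q$-cells in $D'$ is $O(n)$, yielding a linear relative Dehn function for the new presentation and hence relative hyperbolicity of $G$ with respect to $\{H_\omega\}_{\omega\in\Omega}$.
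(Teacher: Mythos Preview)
The paper does not prove this lemma at all; it simply records it as a special case of \cite[Theorem~2.40]{Osin06} (which more generally lets one replace a peripheral subgroup by any relatively hyperbolic structure on it). Your self-contained argument via relative Dehn functions and van Kampen diagrams is essentially Osin's proof of that theorem specialised to the case where $H$ is absolutely hyperbolic, so the two approaches agree in spirit---yours is written out rather than cited, which has the advantage of keeping the note self-contained. The one step that deserves a bit more care is the ``merging'' of adjacent $\mathcal S_H$-cells: two such cells may share disconnected arcs, in which case the naive union is not a disc. The standard fix is to take each maximal connected $\mathcal S_H$-subdiagram $M$, note that its outermost boundary circle is labelled by $H$-letters and bounds a subdiagram of $D$, hence represents $1$ in $G$ and therefore in $H$ (since the peripheral $H$ embeds in $G$ in Osin's setup), and then replace the entire disc it bounds by a single $\mathcal S_H$-cell; this can only decrease the number of $\mathcal R$-cells. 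With that adjustment your perimeter estimate and the subsequent $\mathcal Q$-filling go through exactly as written.
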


Finally our main tool is the following theorem, which is a simplification of \cite[Theorem 7.19]{DGO}.
We remark that the first part of the next theorem was already obtained by Osin in \cite{O07}.

\begin{thm}\label{thm:DF}
Let $G$ be a group hyperbolic relatively to $\{H_\omega\}_{\omega\in \Omega}$.

For any finite subset $Y$ of $G$ there exists  a collection of finite subsets $\Phi_\omega \subset G\setminus\{1\}$, $\omega\in \Omega$, such that for any collection of normal subgroups $N_\omega\trianglelefteq H_\omega$ satisfying that $N_\omega$ avoids $\Phi_\omega$,
the quotient $G/\langle\langle  \cup_\omega N_\omega\rangle\rangle$ is hyperbolic relative to $\{H_\omega/N_\omega\}_{\omega\in \Omega}$, and the quotient map, restricted to $Y$, is injective.

Moreover $\langle\langle  \cup_\omega N_\omega\rangle\rangle$ is a free
product of conjugates of $N_\omega$'s. 
\end{thm}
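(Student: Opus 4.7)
The plan is essentially to invoke \cite[Theorem 7.19]{DGO} and verify that its conclusions specialize to the three statements listed here. No new group-theoretic content should be needed; the work is to translate the ``avoidance of a sufficiently large finite set'' condition in DGO into the form stated and to observe that injectivity on $Y$, the relative hyperbolicity of the quotient, and the free product structure are each already present in the full DGO statement.

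Concretely, I would proceed as follows. First, I would read off from \cite[Theorem 7.19]{DGO} the finite subsets $\Phi_\omega \subset G \setminus \{1\}$ whose avoidance by the $N_\omega$ is the hypothesis there. For the injectivity on the prescribed finite set $Y \subset G$, I would enlarge $\Phi_\omega$, if necessary, so that it contains (finitely many) conjugacy representatives of the nontrivial elements of $Y Y^{-1}$ that meet each $H_\omega$ under conjugation; by Lemma \ref{lem:A} this intersection is controlled, and standard arguments on Dehn filling (as used already by Osin in \cite{O07}) then ensure that the quotient map sends distinct elements of $Y$ to distinct elements.

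Next, the relative hyperbolicity of $G / \langle\langle \cup_\omega N_\omega \rangle\rangle$ with respect to $\{H_\omega / N_\omega\}_{\omega \in \Omega}$ is the first half of the DGO statement (and indeed goes back to Osin). The moreover clause---that the normal closure is a free product of conjugates of the $N_\omega$---is the genuinely new ingredient contributed by DGO, and this is where I expect to do the most careful bookkeeping: one has to check that the collection of finite subsets $\Phi_\omega$ can be chosen uniformly so that \emph{both} the injectivity/hyperbolicity conclusions and the free product conclusion hold simultaneously. Since in the DGO theorem a single choice of avoidance data delivers all these consequences at once, this compatibility is free; I would simply take the union of the finite sets dictated by each requirement.

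The only real obstacle is notational: \cite[Theorem 7.19]{DGO} is phrased in the language of very rotating families and small cancellation, so one needs to extract the statement in the form used here (subgroups avoiding a finite set) rather than rederive anything. Once that dictionary is in place, the proof reduces to a direct citation.
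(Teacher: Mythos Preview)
Your proposal is correct and matches the paper's treatment: the paper gives no proof at all of this theorem, stating it purely as a simplification of \cite[Theorem 7.19]{DGO} (with the first part attributed to Osin \cite{O07}). Your plan to cite DGO directly is exactly what is done; note that the extra step you sketch---enlarging $\Phi_\omega$ by hand to force injectivity on $Y$---is unnecessary, since in the DGO statement the finite sets $\Phi_\omega$ already depend on $Y$ and deliver that injectivity outright.
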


We observe that a bit more can be said about $\langle\langle  \cup_i N_i\rangle\rangle$.
\begin{lem}\label{lem:trans}
Let $G$ be a group and $\{H_\omega\}_{\omega\in \Omega}$  a family of almost malnormal subgroups. Suppose that for $\omega\in \Omega$ there is $N_\omega\trianglelefteq H_\omega$ such that $N_\omega$ is not a  free product of free groups and finite groups.
Suppose that for each $\omega\in \Omega$ there is a subset $S_{\omega}$ in $G$ such that  $$K=\langle\langle  \cup_i N_i\rangle\rangle=*_{\omega\in \Omega}(*_{s\in S_\omega} N_\omega^s),$$
then, $S_\omega$ is a transversal for the $(H_\omega K)$-action on $G$ on the left.
\end{lem}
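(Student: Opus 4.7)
The plan is to prove two things separately: (i) distinct elements of $S_\omega$ yield distinct right cosets of $H_\omega K$ in $G$, and (ii) every such coset meets $S_\omega$. Only part (ii) will need the hypothesis on $N_\omega$; injectivity in (i) is a consequence of the free product structure and the normality of $K$ in $G$.

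For (i), suppose $s, s' \in S_\omega$ satisfy $s' = hks$ with $h \in H_\omega$ and $k \in K$. Since $K \trianglelefteq G$, the element $k_0 \coloneqq s^{-1} k s$ lies in $K$, and using that $h$ normalizes $N_\omega$ one checks
$$k_0^{-1} N_\omega^{s} k_0 = s^{-1} k^{-1} N_\omega k s = N_\omega^{s'}.$$
Thus the two free factors $N_\omega^{s}$ and $N_\omega^{s'}$ of $K$ would be $K$-conjugate, which by a standard Bass--Serre argument forces $s = s'$.

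For (ii), fix $g \in G$ and apply the Kurosh subgroup theorem to $N_\omega^g \le K$ with respect to the given decomposition of $K$, writing $N_\omega^g = F * (*_\lambda H_\lambda)$ with $F$ free and each $H_\lambda$ equal to the intersection of $N_\omega^g$ with a $K$-conjugate $g_\lambda N_{\omega_\lambda}^{s_\lambda} g_\lambda^{-1}$ of a free factor. The hypothesis on $N_\omega$ rules out the possibility that every $H_\lambda$ is trivial or finite, since otherwise $N_\omega^g$ would be a free product of a free group and finite groups. For any infinite $H_\lambda$, the inclusion $H_\lambda \le H_\omega^g \cap H_{\omega_\lambda}^{s_\lambda g_\lambda^{-1}}$ together with almost malnormality forces $\omega_\lambda = \omega$ and $g g_\lambda s_\lambda^{-1} \in H_\omega$. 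The decisive computation, using once more that this element of $H_\omega$ normalizes $N_\omega$, then yields $g_\lambda N_\omega^{s_\lambda} g_\lambda^{-1} = N_\omega^g$ and hence $H_\lambda = N_\omega^g$. The Kurosh decomposition therefore collapses to the single factor $H_\lambda$, and unwinding the identity $g g_\lambda s_\lambda^{-1} \in H_\omega$ places $g$ in $H_\omega s_\lambda K = H_\omega K s_\lambda$.

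The main obstacle is upgrading containment to equality in the Kurosh analysis: a priori one only obtains $H_\lambda \le g_\lambda N_{\omega_\lambda}^{s_\lambda} g_\lambda^{-1}$, and this must be promoted to equality to collapse the decomposition. The combination of almost malnormality (to pin down the conjugating element modulo $H_\omega$) with the normalizing action of $H_\omega$ on $N_\omega$ (to rewrite the conjugate of the free factor as $N_\omega^g$) is precisely what achieves this.
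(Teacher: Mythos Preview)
Your proof is correct and follows the same strategy as the paper's: conjugacy of free factors in $K$ for injectivity, and Kurosh plus almost malnormality for surjectivity. One remark: the ``collapse'' of the Kurosh decomposition is an unnecessary detour---once you have $g g_\lambda s_\lambda^{-1}\in H_\omega$ with $g_\lambda\in K$, the coset relation $g\in H_\omega K s_\lambda$ is immediate, and the paper stops there without promoting $H_\lambda$ to all of $N_\omega^g$.
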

\begin{proof}
Let $s,r \in S_\omega$ and suppose that
$hks=r$ for some $h\in H_\omega$, $k\in K$. Then
$N_\omega^{hks}=N_\omega^{r}$. But $N_\omega^{hks}=N_\omega^{ks}=N_\omega^{sk'}$ for some $k'\in K$.
Thus $N_\omega^s$ and $N_\omega^r$ are two free factors
in a free product decomposition of $K$, conjugated by
$k'\in K$ and hence $s=r$.

Let $g\in G$.   By the Kurosh subgroup theorem (see \cite[I.7.8]{DD89}) $N_\omega^g \leqslant K$ is a
free product where the free factors are either infinite cyclic or of the form $A_{\lambda,r,k}=N_\omega^{g}\cap N_\lambda^{rk}$, where $\lambda\in \Omega$ and $r\in S_\lambda$ and $k\in K$. If all the $A_{\lambda,r, k}$ are finite, $N_\omega^g$ is a
free product of finite and infinite cyclic groups and 
this contradicts our assumptions. So, 
$$|N_\omega^g \cap N_\lambda^{rk}|=\infty$$ for some 
$\lambda\in \Omega$, $r\in S_\lambda$. By almost 
malnormality, we conclude that 
$\lambda=\omega$ and $g\in H_\omega r k$, and hence $g\in H_\omega K r$.
\end{proof}

\section{Relatively hyperbolic groups in the class \texorpdfstring{$\mathfrak{C}$}{C}}
We collect now some properties of the class $\mathfrak{C}$.

\begin{prop}\label{prop:clos}\cite{bartels2008k, bartels2011farrell, bartels2012k} The following properties hold:
\begin{enumerate}
\item[{\rm (1)}] $\mathfrak{C}$ is closed under taking subgroups.
\item[{\rm (2)}] $\mathfrak{C}$ is closed under free products.
\item[{\rm (3)}] Gromov hyperbolic groups and abelian groups are in $\mathfrak{C}$.
\item[{\rm (4)}] If $\pi\colon G\to H$ is a morphism such that $H$ is in $\mathfrak{C}$
and for every infinite cyclic group $Z$ of $H$, $\pi^{-1}(Z)$ is in $\mathfrak{C}$ then $G$ is in $\mathfrak{C}$.
\end{enumerate}
\end{prop}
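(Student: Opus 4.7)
The plan is essentially bookkeeping: since $\mathfrak{C}$ is defined by a strengthened form of the Farrell--Jones conjecture (with finite wreath products and coefficients in additive categories), each of (1)--(4) is to be matched with the appropriate inheritance statement in the cited references, the only subtlety being to verify that the formulation of Farrell--Jones used in each reference coincides with the one used to define $\mathfrak{C}$.

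For item (1), closure under subgroups, I would invoke the inheritance property of the ``with finite wreath products'' formulation, which is precisely the reinforcement tailored so that the conjecture passes to arbitrary subgroups; this is part of the framework set up in \cite{bartels2012k}. For item (2), I would use the Bass--Serre picture: $A \ast B$ acts on its Bass--Serre tree with vertex stabilizers conjugate to $A$ or $B$ and trivial edge stabilizers, so closure under free products follows from closure under trees of groups (available in this setting in \cite{bartels2011farrell}) combined with (1). For item (3), the hyperbolic case is the main theorem of \cite{bartels2008k}, and the abelian case is a special case of the CAT(0) result of \cite{bartels2011farrell}, both proved in the form required for membership in $\mathfrak{C}$.

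Item (4) is a version of the transitivity principle, again in its wreath-product formulation. The standard principle requires control over preimages of all virtually cyclic subgroups of $H$, whereas the statement here only assumes this for infinite cyclic subgroups, so a small reduction is needed. Given a virtually cyclic $V \leqslant H$, either $V$ is finite, in which case $\pi^{-1}(V)$ is a finite extension of $\ker(\pi)$, which lies in $\mathfrak{C}$ by (1) applied to $\pi^{-1}(Z)$ for any infinite cyclic $Z$; or $V$ contains a finite-index infinite cyclic $Z$, in which case $\pi^{-1}(V)$ is a finite extension of $\pi^{-1}(Z) \in \mathfrak{C}$. In both cases closure of $\mathfrak{C}$ under finite extensions, a consequence of the wreath-product formulation, provides the hypothesis needed to apply transitivity.

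The main obstacle is one of compatibility of formulations rather than any new mathematical input: the literature contains several variants of the conjecture (fibered, with coefficients, with finite wreath products), and for each of (1)--(4) one needs to cite the corresponding inheritance property for the same strengthened version used to define $\mathfrak{C}$. Once this matching is verified, the statement follows directly from results in \cite{bartels2008k, bartels2011farrell, bartels2012k}.
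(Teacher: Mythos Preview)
The paper gives no proof of this proposition at all: it is stated with the citations \cite{bartels2008k, bartels2011farrell, bartels2012k} and left at that. Your proposal is therefore strictly more detailed than the paper's own treatment, and for items (1)--(3) your bookkeeping is accurate and appropriate.

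For item (4) you correctly notice that the statement is phrased with \emph{infinite cyclic} subgroups rather than the \emph{virtually cyclic} subgroups appearing in the standard transitivity principle, and you supply the missing reduction. There is, however, a small gap in your argument for the case where $V$ is finite: you deduce $\ker(\pi)\in\mathfrak{C}$ by applying (1) to $\pi^{-1}(Z)$ for ``any infinite cyclic $Z$'', but this presupposes that $H$ contains an infinite cyclic subgroup. If $H$ is a torsion group the hypothesis on preimages is vacuous and your reduction does not go through. In the paper's only application (Proposition~\ref{main}) the quotient $H=Q$ is Gromov hyperbolic, so either $Q$ is finite (and $G$ is a finite extension of $K\in\mathfrak{C}$, handled directly by the wreath-product closure) or $Q$ contains infinite-order elements and your argument applies. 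So the gap is harmless in context, but you should either add this hypothesis or note the edge case explicitly.
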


The objective of this note is establish the following observation, that reduces the Farrell-Jones conjecture 
for relatively hyperbolic groups with respect to residually finite groups to a particular kind of group
extensions.
\begin{prop}\label{main}
Let $G$ be  group relatively hyperbolic to a family of 
residually finite finitely generated groups 
$\{H_\omega\}_{\omega\in \Omega}$ lying in $\mathfrak{C}$. 

 There exists a family of normal subgroups 
$N_\omega \trianglelefteq H_\omega$ such that 
$$K=\langle\langle \cup N_\omega\rangle \rangle =*_{\omega\in \Omega}*_{s\in S_\omega} N_\omega^s,$$ 
 $G/K$ is hyperbolic and each $S_\omega$ is a transveral for the $(H_\omega K)$-action of $G$ on the left. In particular $K$ and $G/K$ are in $\mathfrak{C}$.

Moreover, the following are equivalent.
\begin{enumerate}
\item[(a)] $G$ is in $\mathfrak{C}$.
\item[(b)] for every $g\in G$ loxodromic element,
$\gen{K,g}$ is in $\mathfrak{C}$.
\end{enumerate}

\end{prop}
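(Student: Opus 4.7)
The plan is to construct the normal subgroups $N_\omega$ by combining the Dahmani-Guirardel-Osin theorem (Theorem \ref{thm:DF}) with the residual finiteness hypothesis, and then to derive the structural statements and the equivalence (a)$\Leftrightarrow$(b) from the closure properties in Proposition \ref{prop:clos}. As a preparatory step, by iterating Lemma \ref{lem:hyperbolic_parabolic} I may assume that no $H_\omega$ is Gromov hyperbolic: for any $\omega$ with $H_\omega$ hyperbolic, absorb it into the ambient structure and set $N_\omega=1$. This reduction is important because it guarantees that no remaining $H_\omega$ is virtually free.

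Now apply Theorem \ref{thm:DF} to the reduced peripheral structure to obtain finite sets $\Phi_\omega\subset H_\omega\setminus\{1\}$; for each $\omega$, residual finiteness of $H_\omega$ lets me choose a finite-index normal subgroup $N_\omega\trianglelefteq H_\omega$ that avoids $\Phi_\omega$ (intersect the normal cores of finite-index subgroups excluding each element of $\Phi_\omega$). Theorem \ref{thm:DF} then directly yields the free product decomposition $K=*_\omega *_{s\in S_\omega} N_\omega^s$ and says $G/K$ is hyperbolic relative to the finite groups $\{H_\omega/N_\omega\}_\omega$; iterating Lemma \ref{lem:hyperbolic_parabolic} shows $G/K$ is Gromov hyperbolic. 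To invoke Lemma \ref{lem:trans} and obtain the transversal property, I verify that each $N_\omega$ is not a free product of free and finite groups: since $H_\omega$ is not Gromov hyperbolic it is not virtually free, so neither is its finite-index subgroup $N_\omega$, and a finitely generated free product of free and finite groups is virtually free (by Karrass-Pietrowski-Solitar). That $K,G/K\in\mathfrak{C}$ then follows from Proposition \ref{prop:clos}: each $N_\omega$ is a finite-index subgroup of $H_\omega\in\mathfrak{C}$, hence in $\mathfrak{C}$ by (1); $K$ is a free product of such subgroups, hence in $\mathfrak{C}$ by (2); and $G/K$ is Gromov hyperbolic, hence in $\mathfrak{C}$ by (3).

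For the equivalence, (a)$\Rightarrow$(b) is immediate from Proposition \ref{prop:clos}(1), since $\langle K,g\rangle$ is a subgroup of $G$. For (b)$\Rightarrow$(a), apply Proposition \ref{prop:clos}(4) to the quotient $\pi\colon G\to G/K$: since $G/K\in\mathfrak{C}$, it suffices to show $\pi^{-1}(Z)\in\mathfrak{C}$ for every infinite cyclic $Z\leq G/K$. Write $Z=\langle\overline{g}\rangle$ and pick any lift $g\in G$ of $\overline{g}$, so that $\pi^{-1}(Z)=\langle K,g\rangle$. I claim $g$ is loxodromic: it has infinite order because $\overline{g}$ does; and if $g$ were conjugate to some $h\in H_\omega$, then $\overline{g}$ would be conjugate to $\pi(h)$ in the finite group $\pi(H_\omega)=H_\omega/N_\omega$, forcing $\overline{g}$ to have finite order, a contradiction. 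Hence (b) applies and $\pi^{-1}(Z)=\langle K,g\rangle\in\mathfrak{C}$.

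The main obstacle is the compatibility of the two constraints on $N_\omega$: Theorem \ref{thm:DF} requires $N_\omega$ to avoid a prescribed finite set (handled by residual finiteness), while Lemma \ref{lem:trans} requires $N_\omega$ to be structurally rich enough not to decompose as a free product of free and finite groups. The preprocessing step that removes Gromov-hyperbolic peripherals via Lemma \ref{lem:hyperbolic_parabolic} is what reconciles the two, because any finite-index subgroup of a non-virtually-free group automatically fails to admit such a decomposition.
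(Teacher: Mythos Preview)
Your proof is correct and follows essentially the same route as the paper's: reduce away hyperbolic peripherals via Lemma~\ref{lem:hyperbolic_parabolic}, use residual finiteness to pick finite-index $N_\omega$ avoiding the finite sets from Theorem~\ref{thm:DF}, deduce that $G/K$ is hyperbolic and $K$ is the stated free product, invoke Lemma~\ref{lem:trans} for the transversal claim, and derive (a)$\Leftrightarrow$(b) from Proposition~\ref{prop:clos}(1) and (4). The only cosmetic difference is that you phrase the verification for Lemma~\ref{lem:trans} as ``$N_\omega$ is not virtually free'' (citing Karrass--Pietrowski--Solitar), whereas the paper phrases it as ``$N_\omega$ is finitely generated and not hyperbolic''; for finitely generated groups these are equivalent statements.
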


\begin{proof}
We first notice that we can assume that no $H_\omega$ is hyperbolic. Indeed, if some $H_\omega$ is hyperbolic, we can put $N_\omega=\{1\}$ and   use Lemma \ref{lem:hyperbolic_parabolic} to remove this group from the family of parabolic subgroups.

Take $Y=\emptyset \subseteq G$  and for $\omega\in \Omega$, let $\Phi_\omega\subset G-\{1\}$ be the finite set provided by
Theorem \ref{thm:DF}. Using that each $H_\omega$ is  residually finite, we
can find subgroups $N_\omega\trianglelefteq H_\omega$  that $N_\omega$ avoids $\Phi_\omega$ such that $H_\omega/N_\omega$ is finite (and in particular hyperbolic). Thus, by Theorem \ref{thm:DF}, there exists a quotient map $\pi\colon G\to Q$, where $Q$
is relatively hyperbolic with respect to $\{H_\omega/N_\omega\}_{\omega\in \Omega}$, and 
$K=\ker (\pi)=*_{\omega\in \Omega} *_{s\in S_\omega} N_\omega^s $, where the $S_\omega's$ are 
subsets of $G$. Since each $H_\omega/N_\omega$ is hyperbolic,
Lemma \ref{lem:hyperbolic_parabolic}, implies that $Q$
is hyperbolic. By Propostion \ref{prop:clos}(1)--(3), $K$ and $G/K$ are in $\mathfrak{C}$.

Since $H_\omega$ is finitely generated and not hyperbolic, $N_\omega$ is finitely generated and not hyperbolic.
Hence,  $N_\omega$ is not a free product of finite and free groups. The family $\{H_\omega\}_{\omega\in \Omega}$ is almost malnormal by \ref{lem:A} and hence all the hypothesis of Lemma \ref{lem:trans}
are satisfied and then  each $S_\omega$ is
a transversal for the $(H_\omega K)$-action on $G$ on the left.

To establish the moreover part, first notice that, by Proposition \ref{prop:clos} (2),  (a) implies (b).

Assume that (b) holds. We note that since each $N_\omega$ has finite index in $H_\omega$, every element in $H_\omega$ is mapped to a finite order element of $Q$. If $q$ is an infinite order element of $Q$, then any $g\in G$, satisfying that $\pi(g)=q$, is a loxodromic element of $G$. By (b), $\gen{K,g}=\pi^{-1}(\gen{q})$ is in $\mathfrak{C}$ and then by Proposition \ref{prop:clos} (4),  $G$ is in $\mathfrak{C}$.
\end{proof}

\smallskip
\noindent {\bf Acknowledgements.} The authors would like to thank Henrik R\"uping for pointing out a mistake in a previous version.

\end{document}